\documentclass[reqno]{amsart}
\usepackage{mathtools}
\usepackage{amssymb}
\usepackage{leftidx}
\usepackage{extarrows}
\newtheorem{theorem}{Theorem}[section]
\newtheorem{lemma}[theorem]{Lemma}
\newtheorem{prop}[theorem]{Proposition}

\theoremstyle{definition}

\newtheorem{cor}[theorem]{Corollary}

\theoremstyle{remark}
\newtheorem{remark}[theorem]{Remark}

\numberwithin{equation}{section}

\newcommand{\abs}[1]{\lvert#1\rvert}

\renewcommand{\leq}{\leqslant}
\renewcommand{\geq}{\geqslant}

\newcommand{\la}{\lambda}

\newcommand{\Z}{\mathbb{Z}}

\def \bp{\begin{prop}\label}
\def \ep{\end{prop}}
\def \bt{\begin{theorem}\label}
\def \et{\end{theorem}}

\begin{document}

\title{Graded characters, Demazure multiplicities, and Chebyshev polynomials}

\author{Rekha Biswal}

\address{
School of Mathematical Sciences, National Institute of Science Education and Research, Bhubaneswar, an OCC
of Homi Bhabha National Institute(HBNI), P. O. Jatni, Khurda, 752050, Odisha, India
}

\email{rekhabiswal27@gmail.com, rekha@niser.ac.in}

\date{}

\begin{abstract}
In this paper, we study numerical multiplicities of Demazure modules in the excellent filtration of $\mathfrak{sl}_2[t]$-modules $V(\xi)$, where $V(\xi)$ denotes the fusion product associated to a partition $\xi$. We express generating functions for the numerical multiplicities of level $m$ Demazure modules in excellent filtrations of $V(\xi)$ in terms of quotients of Chebyshev polynomials, thereby generalizing earlier results for fat hook partitions.

We also revisit the graded multiplicities of irreducible $\mathfrak{sl}_2$-modules in $V(\xi)$ and provide a new and self-contained proof of their description in terms of cocharge Kostka--Foulkes polynomials. While this connection has been established in earlier works, our approach is elementary and relies only on recursive structures arising from short exact sequences of fusion products. As a consequence, we obtain a direct and self-contained derivation of the graded characters of $V(\xi)$ in terms of Hall--Littlewood polynomials with coefficients given by cocharge Kostka--Foulkes polynomials.

\end{abstract}

\maketitle

\section{Introduction}

The representation theory of current algebras, and in particular that of $\mathfrak{sl}_2[t]$, has
been the subject of extensive study in recent years due to its deep and fruitful connections with
algebraic combinatorics, symmetric functions, and the theory of affine Lie algebras. A central
theme in this area is the study of finite-dimensional graded $\mathfrak{sl}_2[t]$-modules and, more
specifically, the determination of their graded characters and decomposition into irreducible
$\mathfrak{sl}_2$-modules.

An important class of such modules is given by fusion products, introduced and systematically
studied by Chari and Venkatesh \cite{CV}. These modules are parametrized by partitions and form a rich
family that includes several well-known representations, such as local Weyl modules and Demazure
modules. Their defining relations encode subtle combinatorial data, and their structure reflects
intricate interactions between representation theory and symmetric function theory.

A fundamental problem is to describe the graded multiplicities of irreducible
$\mathfrak{sl}_2$-modules occurring in these fusion products. Despite significant progress in
understanding their structure, explicit and conceptual descriptions of these graded multiplicities
remain of considerable interest.

On the combinatorial side, Kostka--Foulkes polynomials and modified Hall--Little-wood polynomials occupy a
central position in the theory of symmetric functions. These polynomials encode refined information
about weight multiplicities and carry deep connections to representation theory, geometry, and
combinatorics. In particular, the cocharge Kostka--Foulkes polynomials provide a natural
$q$-analogue of weight multiplicities and encode graded multiplicities of
irreducible modules in various representation-theoretic constructions.

Our first main result concerns the study of excellent filtrations (also known as Demazure flags)
of fusion products. It is known that, under suitable conditions on the partition $\xi$, the module
$V(\xi)$ admits a filtration whose successive quotients are Demazure modules of a fixed level.
The graded multiplicities of these Demazure modules, and their associated generating functions,
encode important structural information about $V(\xi)$.

We study these multiplicities from a combinatorial perspective. In particular, we study the
generating functions associated to Demazure multiplicities and show that, at the level of numerical
multiplicities (i.e., upon specialization $q = 1$), these generating functions admit a simple and
explicit description in terms of Chebyshev polynomials of the second kind. A connection of this
type was previously observed in \cite{BCSV} in the special case of local Weyl modules. Our results
extend this connection to the more general class of modules $V(\xi)$, thereby providing a uniform
description valid for arbitrary partitions $\xi$.

Our second main result establishes a direct and explicit link between graded multiplicities of
irreducible $\mathfrak{sl}_2$-modules in $V(\xi)$ and cocharge Kostka--Foulkes polynomials. More
precisely, we show that these multiplicities are given by suitably normalized cocharge
Kostka--Foulkes polynomials.

Connections between graded characters of such modules and Kostka--Foulkes
polynomials have already been established in the literature, notably through
their interpretation as one-dimensional sums arising from crystal bases and
their realization as graded multiplicities in fusion products of Demazure modules;
see, for example, the work of Kedem and Naoi \cite{Ke,Naoi,Naoi1}. In particular,
these results imply the graded character formula for the modules $V(\xi)$.

In contrast, the approach in the present paper is entirely self-contained and
relies only on elementary and recursive properties of the modules $V(\xi)$,
avoiding the use of crystal-theoretic or affine Lie algebra techniques. Our
arguments are based on short exact sequences and explicit combinatorial
recursions, leading to a direct and conceptually transparent derivation of the
graded character formula, which we believe is of independent interest.

The methods developed here combine recursive structures arising from short exact
sequences of fusion products with combinatorial properties of Kostka--Foulkes
polynomials and generating function techniques. In particular, the recursive
description of fusion products plays a key role in relating graded multiplicities
to cocharge Kostka polynomials while analogous recursive structures for generating functions lead naturally to Chebyshev polynomials.

The paper is organized as follows. In Section 2 we recall the notion of excellent filtrations,
study the numerical multiplicities in the excellent filtration of the modules $V(\xi)$ and prove
their connection with Chebyshev polynomials. In Section 3 we prove our main result expressing
graded multiplicities of irreducible modules in $V(\xi)$ in terms of cocharge Kostka--Foulkes
polynomials by establishing a recursive formula for these polynomials adapted to our setting.

\section{Excellent filtrations and graded multiplicities}

In this section we recall the notion of excellent filtrations (Demazure flags) \cite{CSSW, BCSV} and set up
the notation needed in the rest of the paper.

\subsection*{2.1.}
Throughout this paper we denote by $\mathbb{C}$ the field of complex numbers and by $\mathbb{Z}$ (resp.
$\mathbb{Z}_+, \mathbb{N}$) the subset of integers (resp. non–negative, positive integers).
Let $\mathfrak{sl}_2[t] \cong \mathfrak{sl}_2 \otimes \mathbb{C}[t]$ be the Lie algebra of two by two matrices of trace zero with entries
in $\mathbb{C}[t]$. The degree grading of $\mathbb{C}[t]$ defines
a natural grading on $\mathfrak{sl}_2[t]$. A finite-dimensional $\mathbb{Z}$-graded $\mathfrak{sl}_2[t]$–module is a $\mathbb{Z}$–graded vector
space admitting a compatible graded action of $\mathfrak{sl}_2[t]$:
\[
V = \bigoplus_{k\in \mathbb{Z}} V[k], \quad (a \otimes t^r)V[k] \subset V[k+r], \; a \in \mathfrak{sl}_2, \; r \in \mathbb{Z}_+.
\]

Given a $\mathbb{Z}$-graded $\mathfrak{sl}_2[t]$-module $V$, we denote by $\tau_p^{*}V$
the graded module obtained by shifting the grading, where
\[
\tau_p^{*}V[k] = V[k-p].
\]

\subsection*{2.2.}
The category of finite-dimensional $\mathbb{Z}$–graded $\mathfrak{sl}_2[t]$–modules was the central subject
of many recent papers (see, for example \cite{BCSV, CSSW, CV, FL, KL}). There is a well–known family of
objects in that category which is of particular interest, namely the subclass of fusion products.
We recall their description in terms of generators and relations from \cite[Sec.~6]{CV} ; for a more
traditional definition we refer the reader to \cite[Sec.~5.5]{FL}. Let $x,h,y$ be the standard basis of $\mathfrak{sl}_2$, given by
\[
x = \begin{pmatrix} 0 & 1 \\ 0 & 0 \end{pmatrix}, \quad
h = \begin{pmatrix} 1 & 0 \\ 0 & -1 \end{pmatrix}, \quad
y = \begin{pmatrix} 0 & 0 \\ 1 & 0 \end{pmatrix},
\]
which satisfy the relations
\[
[x,y] = h, \quad [h,x] = 2x, \quad [h,y] = -2y.
\]   Set $u^{(r)} := \frac{1}{r!}u^r$ for $u \in \mathfrak{sl}_2[t], r \in \mathbb{Z}_+$. For a partition $\xi=(\xi_1 \ge \xi_2 \ge \cdots \ge \xi_\ell > 0)$, we set $|\xi|=\sum_{i=1}^{\ell}\xi_i$.

The fusion product associated to $\xi$ is the $\mathfrak{sl}_2[t]$–module $V(\xi)$ generated by an element $v_\xi$ with
defining relations:
\[
(x \otimes \mathbb{C}[t])v_\xi = 0, \quad (h \otimes f)v_\xi = |\xi| f(0)v_\xi, \quad (y \otimes 1)^{|\xi|+1}v_\xi = 0, \tag{2.1}
\]
\[
(x \otimes t)^{(p)}(y \otimes 1)^{(r+p)}v_\xi = 0, \quad r, p \in \mathbb{N}, \; r + p \ge 1 + rk + \sum_{j \ge k+1} \xi_j \text{ for some } k \in \mathbb{N}. \tag{2.2}
\]

It turns out that many other well–known families of representations belong to the class of fusion
products. For example, Demazure modules occur in irreducible integrable representations of
the affine Lie algebra $\widehat{\mathfrak{sl}}_2$ and are parametrized by tuples $(m,n) \in \mathbb{N} \times \mathbb{Z}_+$, where the integer
$m$ is called the level. We denote such a module by $D(m,n)$. If $n_0, n_1 \in \mathbb{Z}_+$ are such that
$n_0 < m$ and $n = n_1 m + n_0$, then the fusion product $V(\xi(m,n))$ associated to the partition
$\xi(m,n) := (m^{n_1}, n_0)$ is isomorphic to $D(m,n)$ (see \cite[Theorem~2]{CV}). Hence Demazure modules
can be categorized into the family of fusion products, but the class of fusion products is
generically much bigger. Nevertheless, there is a beautiful result saying that fusion products
admit a filtration by Demazure modules under a suitable condition on the partition. The
following proposition was proved in \cite[Theorem~3.3]{CSSW}.

\medskip

\noindent
\textbf{Proposition.}
Let $\xi = (\xi_1 \ge \xi_2 \ge \cdots \ge \xi_\ell>0)$ a partition and $m \in \mathbb{N}$. The module $V(\xi)$ admits
a filtration of level $m$, i.e., there exists a decreasing sequence of graded submodules
\[
0 = V_0 \subset V_1 \subset \cdots \subset V_{k-1} \subset V_k = V(\xi)
\]
such that
\[
V_i/V_{i-1} \cong \tau^*_{p_i} D(m,n_i), \quad (p_i, n_i) \in \mathbb{Z}_+ \times \mathbb{Z}_+, \; 1 \le i \le k
\]
if and only if $m \ge \xi_1$. $\hfill \square$

\medskip

These types of filtrations are called excellent filtrations (see \cite{BK, BCSW}) but are also known in the literature as level $m$–Demazure flags (see \cite{CSSW, BCK}). The aim of this section is to give a combinatorial
formula for generating series of the numerical multiplicities in excellent filtrations.

\medskip

\noindent

We will implicitly assume in the rest of the paper that $m \ge \xi_1$ whenever we talk about level
$m$–Demazure flags of $V(\xi)$.

\subsection*{2.3.}
The number of times a particular level $m$–Demazure module $D(m,n)$ appears
in a level $m$–flag is independent of the choice of the flag \cite{CSSW}. We encode these multiplicities in the
polynomial
\[
V_{\xi \to m}^n(q) := \sum_{p \ge 0} [V(\xi) : \tau^*_p D(m,n)] q^p, \tag{2.3}
\]
where
\[
[V(\xi) : \tau^*_p D(m,n)] = \#\{1 \le i \le k : V_i/V_{i-1} \cong \tau^*_p D(m,n)\}.
\]

\subsection*{2.4.}
For a partition $\xi=(\xi_1 \ge \xi_2 \ge \cdots \ge \xi_\ell>0)$ we define an associated pair of partitions $\xi^\pm$ as follows. If $\ell(\xi) = 1$, then
$\xi^+ = \xi$ and $\xi^-$ is the empty partition. Otherwise set $\xi^- = (\xi_1 \ge \xi_2 \ge \cdots \ge \xi_{\ell-2} \ge \xi_{\ell-1} - \xi_\ell)$
and $\xi^+$ the unique partition associated to the tuple $(\xi_1, \xi_2, \ldots, \xi_{\ell-1} + 1, \xi_\ell - 1)$.
We record a lemma which will be needed later; for a proof see \cite[Lem.~3.8 and Eq.~(3.5)]{CSSW}.

\medskip

\noindent
\begin{lemma}\label{recursion}
Let $\xi = (\xi_1 \ge \xi_2 \ge \cdots \ge \xi_\ell > 0)$ be a partition and set $\xi' = (\xi_2 \ge \xi_3 \ge \cdots \ge \xi_\ell > 0)$. We have

(1) $V_{\xi \to \xi_1}^n(q) = q^{(|\xi|-n)/2} V_{\xi' \to \xi_1}^{n-\xi_1}(q)$ if $n \geq \xi_1$.

(2) $V_{\xi \to m}^n(q) = V_{\xi^+ \to m}^n(q) + q^{(\ell-1)\xi_\ell} V_{\xi^- \to m}^n(q)$.

$\hfill \square$
\end{lemma}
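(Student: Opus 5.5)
The plan is to derive both identities from statements at the level of graded characters, and then use the fact that the characters $\operatorname{ch}_{gr}\tau_p^*D(m,n)$, for $(p,n)\in\mathbb{Z}_+\times\mathbb{Z}_+$, are linearly independent. This independence holds because $D(m,n)$ has a unique highest weight $n$ in lowest degree. As a consequence, whenever a module has a level $m$ flag, its multiplicity polynomials $V^n_{\xi\to m}(q)$ are read off uniquely from its graded character, consistent with the flag-independence recalled in 2.3. It therefore suffices to prove the corresponding identities of graded characters.

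For part (2), I would use the short exact sequence of fusion products from \cite{CV},
\[
0\to \tau^*_{(\ell-1)\xi_\ell}V(\xi^-)\to V(\xi)\to V(\xi^+)\to 0 .
\]
The first step is to recall this sequence. The surjection $V(\xi)\to V(\xi^+)$ sends $v_\xi\mapsto v_{\xi^+}$, which is well defined because the relations (2.2) for $\xi$ are implied by those for $\xi^+$. The kernel is generated by the image of $(y\otimes t^{\ell-1})^{(\xi_\ell)}v_\xi$, or a suitable element of degree $(\ell-1)\xi_\ell$, and one checks that it satisfies the relations of $v_{\xi^-}$. A dimension count, using $\dim V(\xi)=\prod_i(\xi_i+1)$, then shows that the resulting surjection onto the kernel is an isomorphism.

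Taking graded characters gives $\operatorname{ch} V(\xi)=\operatorname{ch} V(\xi^+)+q^{(\ell-1)\xi_\ell}\operatorname{ch}V(\xi^-)$. If $\xi^+$ and $\xi^-$ both have largest part at most $m$, each has a level $m$ flag, and expanding both sides in Demazure characters gives (2). The delicate case is $\xi_{\ell-1}=\xi_1=m$, where $\xi^+$ acquires a part $m+1$. Here I would instead iterate the sequence on $\xi^+$, or interpret $V^n_{\xi^+\to m}$ through the formal Demazure-character expansion, so that the identity remains meaningful. I expect this bookkeeping to be the main obstacle.

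For part (1), take $m=\xi_1$ and write $\xi'=(\xi_2,\dots,\xi_\ell)$. The idea is that $V(\xi)\cong V(\xi_1)*V(\xi')$ as a fusion product. A level $\xi_1$ flag $0\subset W_1\subset\cdots$ of $V(\xi')$ then induces a filtration of $V(\xi)$ whose quotients are of the form
\[
V(\xi_1)*\tau^*_pD(\xi_1,n-\xi_1)\cong \tau^*_{p'}D(\xi_1,n).
\]
This isomorphism holds because the partition $(\xi_1,\xi_1^{n_1},n_0)$ is exactly $\xi(\xi_1,n)$. To make this rigorous without fusion-of-filtrations machinery, I would instead compare generators and relations: $v_\xi$ generates, and the relations (2.2) with $k=1$ tie $V(\xi)$ to $V(\xi')$.

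The grading shift is computed from the degree of the highest weight vector of weight $n$. In $V(\xi')$, a copy of $D(\xi_1,n-\xi_1)$ with weight $n-\xi_1$ sits in degree $p$. Adding the $\xi_1$ box in the first position contributes a shift so that the total matches $(|\xi|-n)/2$, which yields the stated factor $q^{(|\xi|-n)/2}$. Because this factor depends only on $n$, it can be pulled out of the sum, giving (1). I would verify the normalization on the base case $\xi=(\xi_1)$, where $\xi'=\emptyset$ and both sides equal $\delta_{n,\xi_1}$.
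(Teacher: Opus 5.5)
The paper gives no argument for this lemma; it only cites \cite[Lem.~3.8 and Eq.~(3.5)]{CSSW}. Your proof of (2) is sound whenever $m\ge \xi^+_1$. It uses the short exact sequence of Theorem~\ref{ses} and the linear independence of the characters $\mathrm{ch}\,\tau_p^*D(m,n)$. The case you call delicate is exactly $\xi=(m^{\ell-1},\xi_\ell)$ with $m=\xi_1$. There $V(\xi)\cong D(m,|\xi|)$ and $V(\xi^+)$ has no level $m$ flag, so (2) only makes sense with formal multiplicities. These are well defined because $\mathrm{ch}\,D(m,n)$ is unitriangular with respect to the $\mathrm{ch}\,V(k)$. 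In that reading (2) follows at once from the character identity, so no iteration is needed.

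Part (1), however, has a genuine gap. You claim that a level $\xi_1$ flag of $V(\xi')$ ``induces'' a filtration of $V(\xi)$ with subquotients $V(\xi_1)*\tau^*_pD(\xi_1,n-\xi_1)$. This presupposes that fusion with $V(\xi_1)$ is exact on filtrations, and nothing of the sort is available. Fusion is defined through the degree filtration generated by a cyclic vector, the $W_i$ need not be cyclic, and nothing guarantees that fusing the pieces of a filtration yields a filtration with the fused subquotients. More seriously, you never derive the grading shift, which is the whole content of (1). A fusion product is generated in degree $0$, so an isomorphism $V(\xi_1)*\tau^*_pD\cong\tau^*_{p'}D(\xi_1,n)$ says nothing about $p'$. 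Yet (1) requires $p'=p+\frac{|\xi'|-(n-\xi_1)}{2}$, a shift that depends on $n$. Saying the first box ``contributes a shift so that the total matches'' assumes the conclusion, and ``compare generators and relations'' is never carried out.

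A short repair is to deduce (1) from (2). Put $m=\xi_1$ and induct on $\xi$ in the order used in the paper.

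If $\xi=(m^{\ell-1},\xi_\ell)$ (this includes $\ell\le 2$), then $V(\xi)\cong D(m,|\xi|)$ and $V(\xi')\cong D(m,|\xi|-m)$. Both sides of (1) then equal $\delta_{n,|\xi|}$.

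Otherwise $\ell\ge 3$ and $\xi_{\ell-1}<m$. Hence $\xi^{\pm}=(m,(\xi')^{\pm})$, and all parts of $\xi^{\pm}$ and $(\xi')^{\pm}$ are at most $m$. Apply (2) to $\xi$, then the induction hypothesis to $\xi^{\pm}$ (using $|\xi^-|=|\xi|-2\xi_\ell$). Then apply (2) to $\xi'$, which has length $\ell-1$ and last part $\xi_\ell$. This gives
\[
V^n_{\xi\to m}(q)=q^{\frac{|\xi|-n}{2}}\Big(V^{n-m}_{(\xi')^{+}\to m}(q)+q^{(\ell-2)\xi_\ell}V^{n-m}_{(\xi')^{-}\to m}(q)\Big)=q^{\frac{|\xi|-n}{2}}V^{n-m}_{\xi'\to m}(q),
\]
since $(\ell-1)\xi_\ell+\frac{|\xi|-2\xi_\ell-n}{2}=(\ell-2)\xi_\ell+\frac{|\xi|-n}{2}$. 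So the factor $q^{(|\xi|-n)/2}$ comes from exponent bookkeeping in the recursion, not from any property of fusion.
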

\medskip

\noindent
\textbf{Remark.}
We see that Lemma \ref{recursion} has the advantage that one can derive recursive formulas
for $V_{\xi \to m}^{n}(q)$ (see for example \cite[Prop.~2.3]{BCSV}) but closed formulas were established only in
the case of $\xi = (1^s) ,m = 2$ \cite[Thm.~3.3]{CSSW} and $\xi = (1^s) ,m = 3$ for all $s \in \mathbb{N}$ \cite[Sec.~1.6]{BCSV}. Combinatorial formulae for the coefficients of $q$ in $V_{\xi \to m}^{n}(q)$ are also established in \cite{BK} for the case where $\xi$ is a fat hook partition in terms of admissible Dyck paths with comajor index statistics. In the theorem below, we express the generating function for the numerical multiplicities ($q=1$) of level $m$-Demazure modules in the excellent filtration of modules $V(\xi)$ for an arbitrary partition $\xi$ in terms of quotients of Chebyshev polynomials which was earlier established in \cite{BCSV} only for the case where $\xi=(1^s)$ for some $s \in \mathbb{N}$ .

\subsection*{2.5.}
The numerical multiplicities have been intensively studied for the partition $\xi$ where $\xi_i = 1$,
$1 \le i \le \ell$. It turns out that the generating function 

\[
A_{1 \to m}^{\,n}(x, q)
= \sum_{r \geq 0}
V_{(1^{n+2r}) \to m}^{\,n}(q)\, x^r
\]
 evaluated at $q = 1$ is a
rational function which can be expressed in terms of Chebyshev polynomials \cite{BCSV}. We shall explain
this connection in more detail and give a more general version of this result. The Chebyshev
polynomials of the second kind are defined by the recurrence relation
\[
U_n(x) = 2xU_{n-1}(x) - U_{n-2}(x), \quad n \ge 2,
\]
with initial data $U_0(x) = 1$ and $U_1(x) = 2x$. Define polynomials
\[
p_n(x) := x^{n/2} U_n((2\sqrt{x})^{-1}) = \sum_{s=0}^{\lfloor n/2 \rfloor} (-1)^s
\begin{bmatrix}
n - s \\
s
\end{bmatrix}
x^s.
\]
Then $\{p_n(x)\}_{n\ge 0}$ is defined by the recurrence
\[
p_{n+1}(x)=p_n(x)-x p_{n-1}(x), \qquad p_0(x)=p_1(x)=1.
\]
Further, set
\[
p_\xi(x) = \prod_{i=1}^\ell p_{\xi_i}(x).
\]

\begin{lemma}\label{recursionchebyshev}

For all integers $i \ge j \ge 1$, we have
\[
p_i(x)\,p_j(x)=p_{i+1}(x)\,p_{j-1}(x)+x^j p_{i-j}(x).
\]
\end{lemma}

\begin{proof}
We proceed by induction on $j$.

\textbf{Base case ($j=1$):} We verify
\[
p_i p_1 = p_{i+1}p_0 + x p_{i-1}.
\]
Since $p_0=p_1=1$, this becomes
\[
p_i = p_{i+1} + x p_{i-1},
\]
which is exactly the recurrence relation rewritten from
\[
p_{i+1}=p_i-x p_{i-1}.
\]
Thus the statement holds for $j=1$.

\textbf{Inductive step:} Assume that for some $j \ge 1$ and all $i \ge j$,
\[
p_i p_j = p_{i+1}p_{j-1} + x^j p_{i-j}.
\]
We prove the result for $j+1$.

Using the recurrence $p_{j+1}=p_j - x p_{j-1}$, we get
\[
p_i p_{j+1} = p_i p_j - x p_i p_{j-1}.
\]

Applying the induction hypothesis to both terms yields
\[
p_i p_j = p_{i+1}p_{j-1} + x^j p_{i-j},
\]
\[
p_i p_{j-1} = p_{i+1}p_{j-2} + x^{j-1} p_{i-j+1}.
\]

Substituting these expressions,
\[
p_i p_{j+1}
= p_{i+1}p_{j-1} + x^j p_{i-j}
- x\big(p_{i+1}p_{j-2} + x^{j-1}p_{i-j+1}\big).
\]

Rearranging,
\[
p_i p_{j+1}
= p_{i+1}(p_{j-1} - x p_{j-2}) + x^j (p_{i-j} - p_{i-j+1}).
\]

Using the recurrence relations
\[
p_j = p_{j-1} - x p_{j-2}, \qquad
p_{i-j} - p_{i-j+1} = x p_{i-j-1},
\]
we obtain
\[
p_i p_{j+1}
= p_{i+1}p_j + x^{j+1} p_{i-j-1}.
\]

This completes the induction.
\end{proof}

For a series $f(x) = \sum_{k \ge 0} a_k x^k$ with complex coefficients we denote by $f(x)[k]$ its $k$–th coefficient $a_k$. Then we have following identity:
\[
p_\xi(x)[k] = p_{\xi^+}(x)[k] + p_{\xi^-}(x)[k - \xi_\ell]. \tag{2.6}
\] by applying Lemma \ref{recursionchebyshev} to the last two factors in the product defining $p_\xi(x)$ .
Hence 

\begin{equation}\tag{2.7}
p_{\xi^+}(x) + x^{\xi_\ell} p_{\xi^-}(x) = p_\xi(x).
\end{equation}

\medskip

\noindent
\begin{theorem}\label{mainthm}
Let $\xi = (\xi_1 \ge \xi_2 \ge \cdots \ge \xi_\ell > 0)$ be a partition, let $m \in \mathbb{N}$ with $m \ge \xi_1$, and let $n \in \mathbb{Z}_+$. Assume that $|\xi|-n \in 2\mathbb{Z}$. Write $n = n_1 m + n_0$ with $n_0, n_1 \in \mathbb{Z}_+$ and $0 \le n_0 < m$. Then the numerical multiplicity is given by
\[
V_{\xi \to m}^n(1) =
\Big(\frac{p_{m-n_0-1}(x)\,p_\xi(x)}{p_m(x)^{n_1+1}}\Big)
\left[\frac{\abs{\xi} - n}{2}\right].
\]
\end{theorem}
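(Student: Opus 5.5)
The plan is to prove the formula by induction, using the recursion of Lemma \ref{recursion}(2) specialized at $q=1$, and to check that the right-hand side obeys the same recursion by means of (2.7). For a partition $\xi$ with $\xi_1\le m$, write
\[
F_{\xi}(n):=\Big(\frac{p_{m-n_0-1}(x)\,p_\xi(x)}{p_m(x)^{n_1+1}}\Big)\left[\tfrac{|\xi|-n}{2}\right].
\]
Here $m$ and $n=n_1m+n_0$ are fixed, so the prefactor $p_{m-n_0-1}/p_m^{n_1+1}$ does not depend on $\xi$.

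\textbf{Recursion step.} Note that $|\xi^+|=|\xi|$ and $|\xi^-|=|\xi|-2\xi_\ell$. Multiply (2.7) by $p_{m-n_0-1}/p_m^{n_1+1}$ and extract the coefficient of $x^{(|\xi|-n)/2}$. The term $x^{\xi_\ell}p_{\xi^-}$ then contributes its coefficient at $(|\xi|-n)/2-\xi_\ell=(|\xi^-|-n)/2$. This gives
\[
F_\xi(n)=F_{\xi^+}(n)+F_{\xi^-}(n),
\]
which is exactly Lemma \ref{recursion}(2) at $q=1$. Both sides vanish when the parity condition fails, and this parity is preserved by the step.

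\textbf{Induction order.} The recursion may only be applied while $\xi^+$ still has largest part $\le m$, since otherwise $V_{\xi^+\to m}$ is not defined. The largest part of $\xi^+$ exceeds $m$ only if $\xi_{\ell-1}=m$.

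I will induct on $\ell(\xi)$, and for fixed length, on the position of $\xi$ in the dominance order among partitions of $|\xi|$ with at most $\ell$ parts. The partition $\xi^-$ has fewer parts. The partition $\xi^+$ either strictly dominates $\xi$ or has fewer parts (when $\xi_\ell=1$).

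The terminal cases are $\ell\le 1$ and $\xi_{\ell-1}=m$. In both, $\xi=(m^a,b)$ with $0\le b\le m$, and so $V(\xi)\cong D(m,N)$ with $N=am+b$. Hence the base case is the claim that $F_{(m^a,b)}(n)=\delta_{n,N}$.

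\textbf{Base case (main obstacle).} For $n=N$ every series involved has constant term $1$, so the coefficient at degree $0$ equals $1$. The case $b=m$ is handled by $n_1=a+1$, $n_0=0$ together with $p_{m-1}/p_m$.

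For $n<N$ with $N-n$ even, I must show that the coefficient of $x^{d}$ vanishes, where
\[
d=\tfrac{(a-n_1)m+b-n_0}{2}\quad\text{in}\quad \frac{p_{m-n_0-1}\,p_b\,p_m^{a}}{p_m^{n_1+1}}.
\]
When $n_1\le a$ the quotient is a polynomial of low degree. I will compare degrees, using $\deg p_k=\lfloor k/2\rfloor$, and expect $d$ to exceed the degree.

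When $n_1>a$ the expression is a genuine power series, $p_{m-n_0-1}p_b/p_m^{j}$ with $j\ge 1$. For this case I plan to use the closed form $p_k=(\alpha^{k+1}-\beta^{k+1})/(\alpha-\beta)$ with $\alpha+\beta=1$ and $\alpha\beta=x$. I will also try repeated use of Lemma \ref{recursionchebyshev} to rewrite $p_{m-n_0-1}p_b$ as a combination of $p_m\cdot(\text{lower }p_k)$ and pure powers $x^{s}p_{r}$. This should exhibit the vanishing, or reduce it to the case $n_1\le a$.

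A fallback is Lemma \ref{recursion}(1), which strips off a part equal to $m$. On the Chebyshev side it corresponds to dividing $p_\xi$ by $p_m$ and shifting $n_1$ down by one, reducing to $a=0$. The remaining base case is then $\xi=(b)$ with $b\le m$, i.e.\ $V(\xi)\cong D(m,b)$, and the claim is the identity
\[
\Big(\frac{p_{m-n_0-1}\,p_b}{p_m^{n_1+1}}\Big)\left[\tfrac{b-n}{2}\right]=\delta_{n,b},
\]
which I expect to be the hardest step to verify cleanly.
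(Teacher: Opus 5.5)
Your argument follows the paper's: Lemma~\ref{recursion}(2) at $q=1$ is matched against (2.7), with induction on the length. The paper orders by $(\ell,\xi_\ell)$ rather than by dominance, but that makes no difference. Your extra terminal case $\xi=(m^a,b)$ is a real improvement on the paper. The paper also applies the recursion when $\xi_{\ell-1}=m$. There $\xi^+_1=m+1$, so $V(\xi^+)$ has no level-$m$ flag. For $\xi=(m,b)$, using $V(\xi)\cong D(m,m+b)$ and $V(\xi^-)=V(m-b)$, the recursion would even force $V^{m-b}_{\xi^+\to m}(1)=-1$. Your base cases are exactly what makes the induction sound.

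What you leave open is the base identity; the paper also only asserts it. Your case split there is off by one:
\begin{itemize}
\item $p_{m-n_0-1}p_b\,p_m^{a-n_1-1}$ is a polynomial only for $n_1<a$, and there your degree count works.
\item $n_1>a$ forces $n\ge (a+1)m\ge N$, so it never occurs for $n<N$.
\item The only real case is $n_1=a$, $0\le n_0<b$, $b-n_0$ even. It asks for $\bigl(p_{m-n_0-1}p_b/p_m\bigr)[(b-n_0)/2]=0$, which is your fallback identity. You need no appeal to Lemma~\ref{recursion}(1) to reach it.
\end{itemize}

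This identity follows from your second idea, iterating Lemma~\ref{recursionchebyshev}:
\begin{itemize}
\item If $m-n_0-1\ge b$, apply it $n_0+1$ times starting from the pair $(m-n_0-1,b)$.
\item Otherwise apply it $m-b$ times starting from the pair $(b,m-n_0-1)$.
\end{itemize}
In both cases the hypothesis $i\ge j\ge 1$ holds at every step. Each step produces an error term $x^{e}p_r$ with $e\ge b-n_0$, so
\[
p_{m-n_0-1}\,p_b=p_m\,p_{b-n_0-1}+x^{b-n_0}R(x),\qquad R\in\mathbb{Z}[x].
\]
Hence $p_{m-n_0-1}p_b/p_m\equiv p_{b-n_0-1}\pmod{x^{b-n_0}}$. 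Since $\deg p_{b-n_0-1}\le (b-n_0-2)/2<(b-n_0)/2<b-n_0$, the required coefficient vanishes. With this your argument is complete, and more complete than the paper's.
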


\medskip

\noindent\textit{Proof.}
We prove the statement by induction on the following partial order on partitions. Given partitions $\xi$ and $\xi'$ with $\ell$ and $\ell'$ parts, respectively, we write $\xi > \xi'$ if either $\ell > \ell'$, or $\ell = \ell'$ and $\xi_\ell > \xi'_\ell$. In particular, for the partitions $\xi^+$ and $\xi^-$ appearing in Lemma~\ref{recursion}, one has $\xi > \xi^+$ and $\xi > \xi^-$.

\medskip

\noindent\emph{Base case.} When $\ell = 1$, say $\xi = (r)$,  $V(\xi)$ is just the irreducible module $V(r)$ of $\mathfrak{sl}_2$, and one has $[V(\xi):D(m,n)]=\delta_{n,r}$ for every $m \geq r$. This agrees with the corresponding coefficients of the Chebyshev quotients, and hence verifies the claim for all minimal elements in the above partial order.
\medskip

\noindent\emph{Inductive step.} Assume that the formula holds for all partitions $\xi' < \xi$. We prove it for $\xi$.

By Lemma~\ref{recursion}, we have
\begin{align*}
V_{\xi \to m}^n(1)
&= V_{\xi^+ \to m}^n(1) + V_{\xi^- \to m}^n(1).
\end{align*}
Since $\xi^+, \xi^- < \xi$, the induction hypothesis applies to both terms, and hence
\begin{align*}
V_{\xi^+ \to m}^n(1)
&= \Big(\frac{p_{m-n_0-1}(x)\,p_{\xi^+}(x)}{p_m(x)^{n_1+1}}\Big)
\left[\frac{\abs{\xi} - n}{2}\right], \\
V_{\xi^- \to m}^n(1)
&= \Big(\frac{p_{m-n_0-1}(x)\,p_{\xi^-}(x)}{p_m(x)^{n_1+1}}\Big)
\left[\frac{\abs{\xi} - n}{2} - \xi_\ell\right].
\end{align*}
Combining these, we obtain
\begin{align*}
V_{\xi \to m}^n(1)
&= \Big(\frac{p_{m-n_0-1}(x)}{p_m(x)^{n_1+1}}\Big)
\Bigg(
 p_{\xi^+}(x) \left[\frac{\abs{\xi} - n}{2}\right]
 + p_{\xi^-}(x) \left[\frac{\abs{\xi} - n}{2} - \xi_\ell\right]
\Bigg).
\end{align*}
Using the identity \emph{(2.7)} relating $p_{\xi}(x)$ with $p_{\xi^+}(x)$ and $p_{\xi^-}(x)$, namely
\[
p_{\xi}(x) = p_{\xi^+}(x) + x^{\xi_\ell} p_{\xi^-}(x),
\]
and the corresponding shift property of coefficients, we deduce that the expression in parentheses equals
\[
p_{\xi}(x)\left[\frac{\abs{\xi} - n}{2}\right].
\]
Therefore,
\[
V_{\xi \to m}^n(1)
= \Big(\frac{p_{m-n_0-1}(x)\,p_{\xi}(x)}{p_m(x)^{n_1+1}}\Big)
\left[\frac{\abs{\xi} - n}{2}\right],
\]
as required. This completes the induction.
\hfill $\square$



\begin{remark}
It is a natural and interesting problem to seek a combinatorial interpretation of the coefficients
of $x$ appearing in the expression of Theorem \ref{mainthm}. In particular, it would be desirable to describe
these coefficients in terms of suitable statistics on Dyck paths or related lattice path models.

Such a description is known in the special case when $\xi$ is a fat hook partition, where graded
multiplicities admit a combinatorial interpretation in terms of admissible Dyck paths with comajor
index statistics (see \cite{BK}). It would be very interesting to develop analogous combinatorial
models in the general setting considered here.

We also note an intriguing feature of Theorem \ref{mainthm}: although the polynomials $p_n(x)$ (and hence the
numerator appearing in the quotient of Chebyshev polynomials) involve alternating signs and thus
have coefficients which are not manifestly non-negative, the theorem implies that the specific
coefficient
\[
[x^{\frac{|\xi|-n}{2}}]
\]
of the resulting quotient, which computes the numerical multiplicity, is a non-negative integer.
In general, the quotient itself need not have all coefficients non-negative, and initial coefficients
may even be negative. It would be desirable to understand this distinguished-coefficient positivity
directly at a combinatorial level, and more generally to explain the sign patterns exhibited by the
coefficients of these Chebyshev quotients.

A successful approach in this direction could lead to a uniform combinatorial model for graded
multiplicities in excellent filtrations, not only for fat hook partitions but for arbitrary
partitions, thereby significantly generalizing existing results.

It is also natural to speculate that there exists a suitable $q$-analogue of the Chebyshev
polynomials such that the corresponding quotients yield formulae for the generating series of
graded multiplicities. While the present result provides such a description only in the numerical
case (i.e., upon specialization $q=1$), it would be very interesting to explore whether a
$q$-deformation of this framework can recover the full graded multiplicity generating functions.
\end{remark}
\section{The graded character of $V(\xi)$ via recursive formulas for Kostka–Foulkes polynomials} 

We now relate the recursive structure of fusion products to that of Kostka--Foulkes polynomials. This connection provides a natural framework for expressing graded multiplicities of irreducible modules in terms of cocharge Kostka--Foulkes polynomials. In particular, we will show that the graded multiplicity of an irreducible $\mathfrak{sl}_2$-module $V(r)$ in $V(\xi)$ is given by a suitably normalized cocharge Kostka--Foulkes polynomial.\\
Let $\langle\cdot,\cdot\rangle$ be the Hall inner product on $\Lambda$,
the ring of symmetric functions \cite{Macdonald}. The modified Hall--Littlewood polynomials 
$Q'_{\mu}=Q'_{\mu}(x;q)$ are defined as the basis of $\Lambda[q]$ 
dual (or adjoint) to the ordinary Hall--Littlewood polynomials 
$P_{\la}=P_{\la}(x;q)$ with respect to
$\langle\cdot,\cdot\rangle$:
\[
\langle P_{\la},Q'_{\mu}\rangle=\delta_{\la\mu}.
\]
The Kostka--Foulkes polynomials $K_{\la\mu}(q)$ are then defined as 
\[
Q'_{\mu}(x;q)=\sum_{\la} K_{\la\mu}(q)s_{\la}(x),
\]
where the $s_{\la}(x)$ are the Schur functions.
The closely related cocharge Kostka polynomials $\tilde{K}_{\la\mu}(q)$
are given by
\[
\tilde{K}_{\la\mu}(q)=q^{n(\mu)} K_{\la\mu}(1/q),
\]
where $n(\mu):=\sum_{i\geq 1} (i-1)\mu_i=\sum_{i\geq 1} \binom{\mu'_i}{2}$.

For $\mu$ a partition such that $n:=l(\mu)\geq 2$, let 
\begin{align*}
\mu^-&:=(\mu_1,\dots,\mu_{n-2},\mu_{n-1}-\mu_n,0) \\
\mu^+&:=w(\mu_1,\dots,\mu_{n-2},\mu_{n-1}+1,\mu_n-1)
\end{align*}
with $w\in S_n$ such that $\mu^+$ is a partition.

\begin{theorem}\label{thmKtilde}
Let $\la$ be a partition of length at most $2$.
Then
\begin{equation}\label{eqKthilde}
\tilde{K}_{\la\mu}(q)=\tilde{K}_{\la\mu^+}(q)
+q^{(n-1)\mu_n} \tilde{K}_{(\la_1-\mu_n,\la_2-\mu_n),\mu^-}(q).
\end{equation}
\end{theorem}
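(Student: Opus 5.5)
The plan is to reduce \eqref{eqKthilde} to an identity of symmetric polynomials in two variables. Set $N=|\mu|$ and
\[
\tilde Q'_\mu(x;q):=q^{n(\mu)}Q'_\mu(x;1/q)=\sum_\la \tilde K_{\la\mu}(q)s_\la(x).
\]
When we specialize to two variables $x=(x_1,x_2)$, every $s_\la$ with $\ell(\la)\ge 3$ vanishes. The $s_\la(x_1,x_2)$ with $\ell(\la)\le 2$ are linearly independent. We also have $s_{(\la_1-a,\la_2-a)}(x_1,x_2)\,(x_1x_2)^{a}=s_\la(x_1,x_2)$. Hence the theorem for all two-row $\la$ at once is equivalent to the two-variable identity
\[
\tilde Q'_\mu(x_1,x_2;q)=\tilde Q'_{\mu^+}(x_1,x_2;q)+q^{(n-1)\mu_n}(x_1x_2)^{\mu_n}\,\tilde Q'_{\mu^-}(x_1,x_2;q). \qquad (\ast)
\]
Here we set $\tilde K_{\nu,\mu^-}=0$ whenever $\nu$ is not a partition, and we drop the trailing zero of $\mu^-$. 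A useful equivalent form uses the monomial coefficients $M_k(\mu):=\tilde Q'_\mu(x_1,x_2)[x_1^{N-k}x_2^k]$ for $k\le N/2$. Since $K_{\la,(N-k,k)}=1$ for $\la_2\le k$ and $0$ otherwise, $M_k(\mu)=\sum_{j\le k}\tilde K_{(N-j,j),\mu}(q)$. Then $(\ast)$ reads $M_k(\mu)=M_k(\mu^+)+q^{(n-1)\mu_n}M_{k-\mu_n}(\mu^-)$, and taking differences in $k$ recovers \eqref{eqKthilde}.

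To compute in two variables I would use an explicit formula for two-row Kostka--Foulkes polynomials, namely the $\mathfrak{sl}_2$ fermionic (Kirillov--Reshetikhin) formula:
\[
\tilde K_{(N-k,k),\mu}(q)=\sum_{\nu\vdash k} q^{c(\nu)}\prod_{i\ge1}\begin{bmatrix}P_i(\nu)+m_i(\nu)\\ m_i(\nu)\end{bmatrix}_q .
\]
The ingredients are as follows.
\begin{itemize}
\item The vacancy numbers are $P_i(\nu)=\sum_j\min(i,\mu_j)-2\sum_j\min(i,\nu_j)$.
\item The quadratic statistic is $c(\nu)=\sum_i(\nu'_i)^2$, up to the standard normalization for cocharge.
\end{itemize}
Alternatively, one could use the charge statistic on two-row tableaux of content $\mu$. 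The point is that $\mu$ enters only through the linear function $Q_i(\mu)=\sum_j\min(i,\mu_j)$. Write $a=\mu_{n-1}$ and $b=\mu_n$. Passing from $\mu$ to $\mu^+$ changes $Q_i$ by $+1$ exactly when $a<i\le b-1+\ldots$, more precisely on the window where $\min(i,a+1)+\min(i,b-1)$ differs from $\min(i,a)+\min(i,b)$, which is $i\ge b$ up to $i\le a$. Passing from $\mu$ to $\mu^-$ removes $\min(i,a)+\min(i,b)-\min(i,a-b)=\min(i,b)+\big(\min(i,a)-\min(i,a-b)\big)$.

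The key step is then a bijection or splitting of the configurations $\nu\vdash k$ into two families.
\begin{itemize}
\item The first family consists of those $\nu$ whose parts avoid the range $[b,a]$ in a suitable sense. For these the vacancy numbers for $\mu$ and $\mu^+$ agree on all relevant rows. The remaining configurations for $\mu^+$ should be matched using the $q$-Pascal rule $\big[{P+m\atop m}\big]=\big[{P-1+m\atop m}\big]+q^{P}\big[{P-1+m\atop m-1}\big]$.
\item The second family consists of those $\nu$ containing at least $b$ columns of the relevant length. Removing $b$ boxes from each of the first $b$ columns should map these configurations onto those for $\mu^-$ and $k-b$. One must check that the change in $c(\nu)$ together with the $q^{P}$ factors produces exactly $q^{(n-1)b}$.
\end{itemize}
For the base of the bookkeeping I would first treat $b=1$ separately, where $\mu^+$ and $\mu^-$ are simplest. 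I would then use that the fermionic sum is piecewise linear in $\mu$ to reduce the general case to iterating the Pascal step $b$ times.

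The main obstacle is the precise matching of the power $q^{(n-1)\mu_n}$ with the change in the quadratic form and in the vacancy numbers. I expect this to be a delicate but finite computation with piecewise-linear functions of $\mu$. As a check, and as a fallback, I would verify $(\ast)$ directly for $n=2$. There $\tilde Q'_{(a,b)}(x_1,x_2)=\sum_{j=0}^{b} q^{j}s_{(a+b-j,j)}$, and the recursion becomes an elementary telescoping identity, which would guide the general combinatorial matching.
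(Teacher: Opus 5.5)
\textbf{There is a genuine gap: the identity $(\ast)$ is never proved for $n\ge 3$.}

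What you have is sound as far as it goes. Your reduction to the two-variable identity $(\ast)$ is correct and is the same first step the paper takes. The paper works with $Q'_\mu$ rather than $\tilde Q'_\mu$, so its $\mu^-$-term carries $q^{(n-2)\mu_n}$ and its $\mu^+$-term carries $q^{n-1-\mu'_{\mu_{n-1}+1}}$. Your $n=2$ check via $\tilde Q'_{(a,b)}(x_1,x_2)=\sum_{j=0}^{b}q^j s_{(a+b-j,j)}$ is a valid, even quicker, version of the paper's base case.

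Beyond that, the fermionic matching is only asserted (``should be matched'', ``in a suitable sense''), and the parts you do write down do not work as stated:
\begin{enumerate}
\item Passing from $\mu$ to $\mu^+$ \emph{lowers} $Q_i$ by $1$ on $b\le i\le a$; it does not raise it.
\item Read literally, removing $b$ boxes from each of the first $b$ columns deletes $b^2$ boxes, not $b$.
\item Read instead as deleting a row of length $b$ (the only reading compatible with $k\mapsto k-b$), the map changes the vacancy numbers to $P_i+\min(i,b)-\min(i,a)+\min(i,a-b)$, for instance $P_i+\min(i,b)$ when $i\le a-b$. It also lowers $\sum_i(\nu'_i)^2$ by $\sum_{i\le b}(2\nu'_i-1)$, which depends on $\nu$ rather than equalling $(n-1)b$.
\end{enumerate}
So the summands do not correspond term by term. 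The $q$-binomial identities that would repair this are exactly the missing content. The proposed reduction to $b=1$ by iterating a Pascal step is also unsupported: for general $b$ the $\mu^-$-term contains the part $a-b$, which no $b=1$ instance produces.

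\textbf{The missing idea.} You need a mechanism that keeps the recursion local in the last two parts and carries the trivial $n=2$ case to arbitrary length. The paper gets this from Jing's $q$-Bernstein operators. In two variables, $Q'_\mu(x_1,x_2;q)=B_{\mu_1}\cdots B_{\mu_n}(1)$, where
\[
(B_mf)(x_1,x_2)=\bigl(x_1^{m+1}f(qx_1,x_2)-x_2^{m+1}f(x_1,qx_2)\bigr)/(x_1-x_2).
\]
One then inducts on $n$, applying $B_{\mu_0}$ to the identity for $\mu$ to obtain it for $(\mu_0,\mu)$. Two properties do all the work:
\begin{enumerate}
\item $B_m\bigl((x_1x_2)^kf\bigr)=(qx_1x_2)^kB_mf$. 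This contributes exactly one extra factor $q^{\mu_n}$ to the $\mu^-$-term for each prepended part.
\item $B_mB_{m+1}=qB_{m+1}B_m$. This handles the case $\mu_0=\mu_1=\cdots=\mu_{n-1}$, where $(\mu_0,\mu^+)$ must be re-sorted, and it supplies the extra factor of $q$ that the $\mu^+$-term then requires.
\end{enumerate}
With these two properties, your $n=2$ computation plus induction on $n$ proves the theorem. Without them, or a fully worked-out bijection on configurations, the argument is incomplete.
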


\begin{proof}
Recall that $s_{\la}(x_1,x_2)$ vanishes unless $l(\la)\leq 2$.
Hence, if in \eqref{eqKthilde} we replace $q$ by $1/q$, then
multiply both sides by $s_{\la}(x_1,x_2)$ and sum over $\la$,
we obtain
\begin{multline*}
Q'_{\mu}(x_1,x_2;q)=q^{n(\mu)-n(\mu^+)} Q'_{\mu^+}(x_1,x_2;q) \\
+q^{n(\mu)-n(\mu^-)-(n-1)\mu_n} (x_1x_2)^{\mu_n} Q'_{\mu^-}(x_1,x_2;q).
\end{multline*}
Here we have also used that
\[
s_{(\la_1+\mu_n,\la_2+\mu_n)}(x_1,x_2)
=(x_1 x_2)^{\mu_n} s_{(\la_1,\la_2)}(x_1,x_2).
\]
Since 
\begin{align*}
\mu^-_i&=\mu_i-\mu_n \chi(n-1\leq i\leq n)\\[1mm]
(\mu^+)'_i&=\mu'_i-\chi(i=\mu_n)+\chi(i=\mu_{n-1}+1),
\end{align*}
the exponents of $q$ on the right can be simplified, resulting in
\begin{multline}\label{rewrite}
Q'_{\mu}(x_1,x_2;q)\\=q^{n-1-\mu'_{\mu_{n-1}+1}} Q'_{\mu^+}(x_1,x_2;q) 
+q^{(n-2)\mu_n} (x_1x_2)^{\mu_n} Q'_{\mu^-}(x_1,x_2;q).
\end{multline}
To prove this we begin by recalling that for 
$\mu=(\mu_1,\dots,\mu_n)$ we have \cite{Jing91}
\begin{equation}\label{Jing}
Q'_{\mu}(x_1,x_2;q)=B_{\mu_1}\cdots B_{\mu_n}(1),
\end{equation}
where $B_m=B_m(x_1,x_2;q)$ is the $q$-Bernstein operator
\begin{equation}\label{qBernstein}
(B_m \,f)(x_1,x_2)=\frac{x_1^{m+1}f(qx_1,x_2)-x_2^{m+1}f(x_1,qx_2)}{x_1-x_2}
\end{equation}
acting on polynomials $f$.
Note in particular that 
\begin{equation}\label{square}
B_m \big( (x_1x_2)^k f(x_1,x_2)\big) = 
(qx_1x_2)^k  B_m \big( f(x_1,x_2)\big)
\end{equation}
and
\begin{equation}\label{commutation}
B_m B_{m+1}-q B_{m+1}B_m=0.
\end{equation}

From \eqref{Jing} and \eqref{qBernstein} it follows that 
\[
Q'_{(m)}(x_1,x_2;q)=
\frac{x_1^{m+1}-x_2^{m+1}}{x_1-x_2}=h_m(x_1,x_2),
\]
the $m$th complete symmetric function of two variables.
(More generally, $Q'_{(m)}(x;q)=h_m(x)$ for arbitrary
alphabets $x$).
It is readily checked that for $m\geq 1$
\begin{equation}\label{hrelation}
h_m(x_1,x_2)=x_1 h_{m-1}(x_1,x_2)+x_2^m.
\end{equation}
Indeed,
\[
\frac{x_1^{m+1}-x_2^{m+1}}{x_1-x_2}-\frac{x_1^{m+1}-x_1 x_2^{m}}{x_1-x_2}
=x_2^m.
\]

After these preliminaries we will prove \eqref{rewrite} by induction 
on $l(\mu)=n$.

We recall that in two variables, for a partition $(a,b)$, one has
\[
Q'_{(a,b)}(x_1,x_2;q) = (x_1 x_2)^b \, Q'_{(a-b)}(x_1,x_2;q),
\]
and for one-part partitions,
\[
Q'_{(k)}(x_1,x_2;q) = h_k(x_1,x_2).
\]

For $n=2$ we have $n-1-\mu'_{\mu_{n-1}+1}|_{n=2}=
1-\mu'_{\mu_1+1}=1-0=1$ so that the equation to be proved is
\begin{equation}\label{rewritenistwee}
Q'_{(\mu_1,\mu_2)}(x_1,x_2;q)=q\, Q'_{(\mu_1+1,\mu_2-1)}(x_1,x_2;q) 
+(x_1x_2)^{\mu_2} Q'_{(\mu_1-\mu_2)}(x_1,x_2;q).
\end{equation}
where $(\mu_1-\mu_2)$ denotes a one-part partition.
But
\begin{align*}
Q'_{(\mu_1,\mu_2)}(x_1,x_2;q)&=B_{\mu_1} Q'_{(\mu_2)}(x_1,x_2;q) \\[1mm]
&=B_{\mu_1} h_{\mu_2}(x_1,x_2) \\
&=\frac{x_1^{\mu_1+1}h_{\mu_2}(qx_1,x_2)-x_2^{\mu_2+1}h_{\mu_2}(x_1,qx_2)}
{x_1-x_2}.
\end{align*}
Since $\mu_2\geq 1$ we may apply \eqref{hrelation} as well as 
its counterpart obtained by interchanging $x_1$ and $x_2$. Hence
\begin{align*}
&Q'_{(\mu_1,\mu_2)}(x_1,x_2;q)\\
&\quad=
\frac{x_1^{\mu_1+1}\big(qx_1 h_{\mu_2-1}(qx_1,x_2)+x_2^{\mu_2}\big)
-x_2^{\mu_1+1}\big(qx_2 h_{\mu_2-1}(x_1,qx_2)+x_1^{\mu_2}\big)}
{x_1-x_2} \\[1mm]
&\quad=q\, B_{\mu_1+1} h_{\mu_2-1}(x_1,x_2)
+(x_1x_2)^{\mu_2} \, \frac{x_1^{\mu_1-\mu_2+1}-x_2^{\mu_1-\mu_2+1}}
{x_1-x_2} \\[1mm]
&\quad=q\, B_{\mu_1+1} Q'_{(\mu_2-1)}(x_1,x_2;q)
+(x_1x_2)^{\mu_2} h_{\mu_1-\mu_2}(x_1,x_2) \\[2mm]
&\quad=q\, Q'_{(\mu_1+1,\mu_2-1)}(x_1,x_2;q)
+(x_1x_2)^{\mu_2} Q_{(\mu_1-\mu_2)}(x_1,x_2;q),
\end{align*}
which settles \eqref{rewritenistwee}.

Now assume the claim is true for lengths up to $n$ and, given
$\mu=(\mu_1,\dots,\mu_n)$ (with $\mu_n>0)$ and $\mu_0\geq\mu_1$,
define 
\[
\la:=(\mu_0,\mu)=(\mu_0,\mu_1,\dots,\mu_n)
\]
and accordingly, let
\begin{equation}\label{lambdamin}
\la^-=(\mu_0,\mu^-)=
(\mu_0,\mu_1,\dots,\mu_{n-2},\mu_{n-1}-\mu_n)
\end{equation}
and
\[
\la^{+}=\sigma(\mu_0,\dots,\mu_{n-2},\mu_{n-1}+1,\mu_n-1)
\]
with $\sigma\in S_{n+1}$ such that $\la^{+}$ is a partition.
Generically, $\mu_0\geq \mu^{+}_1$ so that
\begin{subequations}
\begin{equation}\label{lambdaplusgen}
\la^{+}=(\mu_0,\mu^+).
\end{equation}
However, when $\mu_0=\mu^+_1-1$,
implying that $\mu_0=\mu_1=\mu_2=\dots=\mu_{n-1}$
and thus $\mu^+=(\mu_1+1,\mu_2,\dots,\mu_{n-1},\mu_n-1)$,
we have
\begin{equation}\label{lambdaplusspec}
\la^+=(\mu_0+1,\mu_1,\dots,\mu_{n-1},\mu_n-1).
\end{equation}
\end{subequations}
By \eqref{Jing} and induction
\begin{align*}
Q'_{\la}&(x_1,x_2;q) \\[1mm]
&=B_{\mu_0} Q'_{\mu}(x_1,x_2;q) \\
&=q^{n-1-\mu'_{\mu_{n-1}+1}} B_{\mu_0} Q'_{\mu^+}(x_1,x_2;q)
+B_{\mu_0} \Big(q^{(n-2)\mu_n} (x_1x_2)^{\mu_n} Q'_{\mu^{-}}(x_1,x_2;q)\Big).
\end{align*}
The second term on the right can be simplified by \eqref{square}
and then \eqref{lambdamin} to 
\begin{align*}
q^{(n-2)\mu_n} (qx_1x_2)^{\mu_n} B_{\mu_0} Q'_{\mu^{-}}(x_1,x_2;q)
&=q^{(n-1)\mu_n} (x_1x_2)^{\mu_n} Q'_{\la^{-}}(x_1,x_2;q) \\
&=q^{(n-1)\la_{n+1}} (x_1x_2)^{\la_{n+1}} Q'_{\la^{-}}(x_1,x_2;q).
\end{align*}
For the first term on the right we need to distinguish
\eqref{lambdaplusgen} and \eqref{lambdaplusspec}.
In the case of \eqref{lambdaplusgen},
$\la'_{\la_n+1}=\mu'_{\mu_{n-1}+1}+1$, resulting in
\[
q^{n-1-\mu'_{\mu_{n-1}+1}} B_{\mu_0} Q'_{\mu^+}(x_1,x_2;q)=
q^{n-\la'_{\la_n+1}} Q'_{\la^+}(x_1,x_2;q).
\]
In the case of \eqref{lambdaplusspec}, since $\mu_0=\mu_1$
and $\mu^+=(\mu_1+1,\mu_2,\dots,\mu_{n-1},\mu_n-1)$,
we can use \eqref{commutation} to find
\begin{align*}
B_{\mu_0} Q'_{\mu^+}(x_1,x_2;q)&=
B_{\mu_0} Q'_{(\mu_1+1,\mu_2,\dots,\mu_{n-1},\mu_n-1)}(x_1,x_2;q) \\
&=B_{\mu_0} B_{\mu_1+1} Q'_{(\mu_2,\dots,\mu_{n-1},\mu_n-1)}(x_1,x_2;q) \\
&=qB_{\mu_0+1} B_{\mu_1} Q'_{(\mu_2,\dots,\mu_{n-1},\mu_n-1)}(x_1,x_2;q) \\
&=q\,Q'_{(\mu_0+1,\mu_1,\mu_2,\dots,\mu_{n-1},\mu_n-1)}(x_1,x_2;q) \\
&=q\,Q'_{\la^+}(x_1,x_2;q).
\end{align*}
Moreover, since $\mu_1=\dots=\mu_{n-1}$ and $\la_1=\dots=\la_n$,
\[
\mu'_{\mu_{n-1}+1}=\mu'_{\mu_1+1}=0=\la'_{\la_1+1}=\la'_{\la_n+1}
\]
so that $q^{n-1-\mu'_{\mu_{n-1}+1}}=q^{n-1-\la'_{\la_n+1}}$.
Therefore, we once again find
\[
q^{n-1-\mu'_{\mu_{n-1}+1}} B_{\mu_0} Q'_{\mu^+}(x_1,x_2;q)
=q^{n-\la'_{\la_n+1}} Q'_{\la^+}(x_1,x_2;q).
\]
In summary, for $\la=(\mu_0,\mu)$ of length $n+1$,
\[
Q'_{\la}(x_1,x_2;q)
=q^{n-\la'_{\la_n+1}} Q'_{\la^+}(x_1,x_2;q)+
q^{(n-1)\la_{n+1}} (x_1x_2)^{\la_{n+1}} Q'_{\la^{-}}(x_1,x_2;q),
\]
completing the proof.
\end{proof}
\begin{remark}
We note that alternative recursive formulas for Kostka--Foulkes polynomials have
been studied in the literature; see, for example, Bryan and Jing \cite{BJ}
for an iterative formula. The recursion established in Theorem 3.1 is of a
different nature and is tailored to the structure of fusion products considered
in this paper.
\end{remark}

The following structural result on fusion products will play a key role in relating
the recursion for graded multiplicities to that of Kostka--Foulkes polynomials.
\begin{theorem}[{\cite{CV}}]\label{ses}
Let $\xi = (\xi_1, \dots, \xi_s > 0)$ be a partition with $s$ parts. For $s>1$, there exists a short exact sequence of $\mathfrak{sl}_2[t]$-modules:
\[
0 \to \tau_{(s-1)\xi_s} V(\xi^{-}) 
\to V(\xi) 
\to V(\xi^{+}) \to 0.
\] 
\end{theorem}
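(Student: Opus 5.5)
We would follow the strategy of Chari--Venkatesh: construct the two maps from the presentations (2.1)--(2.2), prove exactness at the right and in the middle by a spanning argument, and finish with a dimension count. The count rests on the identity
\[
(\xi_{s-1}+1)(\xi_s+1)=(\xi_{s-1}+2)\,\xi_s+(\xi_{s-1}-\xi_s+1).
\]
Multiplying by the remaining factors $\prod_{i\le s-2}(\xi_i+1)$ gives
\[
\prod_i(\xi_i+1)=\prod_i(\xi^+_i+1)+\prod_i(\xi^-_i+1),
\]
which is exactly $\dim V(\xi)=\dim V(\xi^+)+\dim V(\xi^-)$. Here we use that $V(\xi)$ is the fusion product of the irreducible modules $V(\xi_i)$, so $\dim V(\xi)=\prod_i(\xi_i+1)$ (see \cite{FL,CV}).

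\textbf{The surjection.} Define $\varphi\colon V(\xi)\to V(\xi^+)$ by $v_\xi\mapsto v_{\xi^+}$. We must check that $v_{\xi^+}$ satisfies (2.1) and (2.2) for $\xi$.
\begin{itemize}
\item Relations (2.1) are immediate, since $|\xi^+|=|\xi|$.
\item For (2.2), it suffices that for every $k$,
\[
rk+\sum_{j\ge k+1}\xi_j\;\ge\; rk'+\sum_{j\ge k'+1}\xi^+_j \quad\text{for some } k'.
\]
Taking $k'=k$, the tail sums of $\xi^+$ are dominated by those of $\xi$, because $\xi^+$ is obtained by moving one box upward; this holds even after the reordering by $w$.
\end{itemize}
Since $\varphi$ maps the cyclic generator to the cyclic generator, it is surjective.

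\textbf{The injection.} Consider the vector $w\in V(\xi)$ of weight $|\xi|-2\xi_s=|\xi^-|$ and degree $(s-1)\xi_s$. The natural candidate is the extremal element $w=(y\otimes t^{s-1})^{(\xi_s)}v_\xi$. If the top-degree piece alone fails, we would instead take the standard symmetric element $y(\xi_s,(s-1)\xi_s)v_\xi$ of Chari--Venkatesh, namely the sum of monomials $(y\otimes t^{b_1})\cdots(y\otimes t^{b_{\xi_s}})$ with $\sum b_i=(s-1)\xi_s$ and each $b_i\le s-1$.

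We then verify that $w$ satisfies the defining relations of $V(\xi^-)$, shifted by $(s-1)\xi_s$:
\begin{itemize}
\item $(x\otimes\mathbb{C}[t])w=0$ and the $h\otimes f$ relation: use the commutation $[x\otimes t^a,(y\otimes t^b)^{(r)}]$ together with (2.2) for $\xi$.
\item $(y\otimes1)^{|\xi^-|+1}w=0$ and the relations (2.2) for $\xi^-$: these should reduce to relations (2.2) of $\xi$ with shifted parameters. The reason is that the tail sums satisfy $\sum_{j\ge k+1}\xi^-_j=\sum_{j\ge k+1}\xi_j-2\xi_s$ for $k\le s-2$, and the extra $\xi_s$ applications of $y$ in degree $s-1$ account for the difference.
\end{itemize}
This yields $\psi\colon\tau_{(s-1)\xi_s}V(\xi^-)\to V(\xi)$. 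We also have $\varphi(w)=0$: in $V(\xi^+)$ the vector $(y\otimes t^{s-1})^{(\xi_s)}v_{\xi^+}$ is killed by a relation (2.2) for $\xi^+$ with $k=s-1$, since $\xi^+$ has at most $s$ parts and its last part is $\xi_s-1$.

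\textbf{Exactness.} It remains to show $\ker\varphi=U(\mathfrak{sl}_2[t])w$. Since the kernel of $V(\xi)\to V(\xi^+)$ is generated by the images of the relations (2.2) for $\xi^+$ that are not already relations for $\xi$, we would identify these extra relations. They are essentially those with $k=s-1$, and we would show that each of them applied to $v_\xi$ lies in $U(\mathfrak{sl}_2[t])w$, using a PBW/straightening argument in $U(y\otimes\mathbb{C}[t])$.

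Granting this, we obtain $\dim V(\xi)\le\dim V(\xi^+)+\dim V(\xi^-)$. The product formula above then forces equality, so $\psi$ is injective and the sequence is exact.

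\textbf{Main obstacle.} We expect the hardest step to be this generation statement for the kernel, together with the verification of (2.2) for $w$. Our first attempt would be an induction on $\xi_s$, treating the relations $(x\otimes t)^{(p)}(y\otimes1)^{(r+p)}$ through the known explicit formula expressing them as sums of products of $y\otimes t^j$, as in \cite{CV}.
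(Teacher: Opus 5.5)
The paper gives no proof of this statement; it quotes it from \cite{CV}. Your outline follows the Chari--Venkatesh strategy: the surjection $v_\xi\mapsto v_{\xi^+}$, a kernel generated by $w=(y\otimes t^{s-1})^{(\xi_s)}v_\xi$, and a dimension count. What you actually carry out is correct: the surjection via dominance of tail sums, the product identity, and $\varphi(w)=0$.

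\textbf{The gap.} The step that holds the content of the theorem is only asserted: that $w$ satisfies the relations (2.2) of $V(\xi^-)$, which is what makes $\psi$ exist at all. Let $\mathbf{y}(r,p)$ be the sum of the monomials $\prod_j(y\otimes t^j)^{(b_j)}$ with $\sum_j b_j=r$ and $\sum_j jb_j=p$. By Garland's identity, (2.2) says $\mathbf{y}(r,p)v_\xi=0$. Taking $r=1$, $k=s$, resp.\ $r=\xi_s+1$, $k=s-1$, gives $(y\otimes t^c)v_\xi=0$ for $c\ge s$ and $(y\otimes t^{s-1})^{(\xi_s+1)}v_\xi=0$. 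These settle three things: the highest weight conditions on $w$; the relations of $\xi^-$ with $k\ge s-1$; and the coincidence $\mathbf{y}(\xi_s,(s-1)\xi_s)v_\xi=w$ of your two candidates.

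For $k\le s-2$, however, matching thresholds only gives $\mathbf{y}(r+\xi_s,\,p+(s-1)\xi_s)v_\xi=0$. Here $\mathbf{y}(r,p)w$ is only the part of that element made of monomials with at least $\xi_s$ factors $y\otimes t^{s-1}$. The remaining monomials are killed neither by that relation nor by the two vanishing statements above. For example, take $\xi=(4,4,2,2)$, so $\xi^-=(4,4)$, and the relation $(r,p)=(2,3)$ of $\xi^-$. One gets
\[
\mathbf{y}(4,9)v_\xi=\mathbf{y}(2,3)w+(y\otimes t^3)(y\otimes t^2)^{(3)}v_\xi ,
\]
and concluding requires a different relation of $\xi$, namely $\mathbf{y}(3,8)v_\xi=(y\otimes t^3)^{(2)}(y\otimes t^2)v_\xi=0$. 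A systematic way of disposing of these cross terms for all $(r,p,k)$ is exactly what is missing. ``Should reduce'' does not supply it.

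\textbf{Two further remarks.} First, the generation of $\ker\varphi$, which you single out as the main obstacle, is short. Let $i_0$ be the least index with $\xi_{i_0}=\xi_{s-1}$. The tail sums of $\xi^+$ are those of $\xi$ minus one exactly for $i_0\le k\le s-1$. Since $k\mapsto rk+\sum_{j>k}\xi_j$ is convex with minimum where $\xi_k\ge r\ge\xi_{k+1}$, the only relations of $\xi^+$ not already relations of $\xi$ are $\mathbf{y}(r,(s-2)r+\xi_s)$ with $\xi_s\le r\le\xi_{s-1}$. A monomial of such an element surviving on $v_\xi$ has $r$ factors of degree at most $s-1$ and total degree $(s-1)r-(r-\xi_s)$. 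Hence it has at least $\xi_s$ factors $y\otimes t^{s-1}$ and lies in $U(y\otimes\mathbb{C}[t])\,w$.

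Second, do not cite $\dim V(\xi)=\prod_i(\xi_i+1)$ for the module \emph{presented} by (2.1)--(2.2) as an independent input. In \cite{CV} the isomorphism with the fusion product is obtained together with this sequence, the upper bound coming from $\dim V(\xi)\le\dim V(\xi^+)+\dim V(\xi^-)$ by induction. Use only the lower bound (the fusion product is a quotient of $V(\xi)$) and run that induction along your partial order. The resulting equalities for $\xi$ and $\xi^\pm$ then force $\psi$ to be injective.
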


\begin{theorem}
If we denote by $[V(\xi) : V(r)]_q$ the graded multiplicity of the irreducible $\mathfrak{sl}_2$-module $V(r)$ in $V(\xi)$, then
\[
[V(\xi) : V(r)]_q = \tilde{K}_{\mu\xi}(q)
\]
where $\mu = \left( \frac{|\xi|+r}{2}, \frac{|\xi|-r}{2} \right)$.
\end{theorem}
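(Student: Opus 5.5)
We argue by induction on the partial order on partitions used in the proof of Theorem~\ref{mainthm}: $\xi > \xi'$ if $\xi$ has more parts, or the same number of parts and a larger last part. Both sides will be shown to obey the same recursion, with matching base cases. Throughout, $\mu=\bigl(\frac{|\xi|+r}{2},\frac{|\xi|-r}{2}\bigr)$, and both sides are interpreted as $0$ when $|\xi|-r$ is odd or $r>|\xi|$.

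\emph{Base case.} Take $\ell(\xi)=1$, say $\xi=(k)$. Then $V(\xi)$ is the irreducible $\mathfrak{sl}_2$-module $V(k)$ concentrated in degree $0$, so $[V(\xi):V(r)]_q=\delta_{r,k}$. On the other side, $Q'_{(k)}=h_k$ gives $K_{\la,(k)}(q)=\delta_{\la,(k)}$. Since $n((k))=0$, we get $\tilde K_{\mu,(k)}(q)=\delta_{\mu,(k)}$. Finally $\mu=(k,0)$ exactly when $r=k$, so the two sides agree.

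\emph{Inductive step.} Let $s=\ell(\xi)\ge 2$. The short exact sequence of Theorem~\ref{ses} consists of graded $\mathfrak{sl}_2$-modules, hence splits over $\mathfrak{sl}_2$. Graded multiplicities are therefore additive:
\[
[V(\xi):V(r)]_q=[V(\xi^+):V(r)]_q+q^{(s-1)\xi_s}[V(\xi^-):V(r)]_q .
\]
Here the shift $\tau_p$ multiplies graded multiplicities by $q^p$, which fixes the sign convention for $\tau$. Both $\xi^+$ and $\xi^-$ are smaller than $\xi$ in the partial order, so the induction hypothesis applies to each. We have $|\xi^+|=|\xi|$, so the first term is $\tilde K_{\mu\xi^+}(q)$. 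We have $|\xi^-|=|\xi|-2\xi_s$, so the second term is $\tilde K_{\nu\xi^-}(q)$ with
\[
\nu=\Bigl(\tfrac{|\xi|-2\xi_s+r}{2},\tfrac{|\xi|-2\xi_s-r}{2}\Bigr)=(\mu_1-\xi_s,\mu_2-\xi_s).
\]
Theorem~\ref{thmKtilde}, applied with the partition there set to $\xi$ and $n=s$, states exactly
\[
\tilde K_{\mu\xi}=\tilde K_{\mu\xi^+}+q^{(s-1)\xi_s}\tilde K_{(\mu_1-\xi_s,\mu_2-\xi_s),\xi^-}.
\]
So both sides satisfy the same recursion, and the induction closes.

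The main obstacle is bookkeeping at the boundary cases rather than any new idea. Three points need checking.
\begin{itemize}
\item The $\xi^\pm$ of Section 2.4 must coincide with the $\mu^\pm$ of Theorem~\ref{thmKtilde}. The trailing zero in $\mu^-$, and the possible vanishing of $\xi_{s-1}-\xi_s$ or $\xi_s-1$, must be handled consistently, since Kostka--Foulkes polynomials are insensitive to zero parts.
\item When $\mu_2-\xi_s<0$, the module side gives $0$ because $r>|\xi^-|$. Correspondingly, $s_{(\mu_1-\xi_s,\mu_2-\xi_s)}$ does not occur in the proof of Theorem~\ref{thmKtilde}, so $\tilde K$ with a negative second part must be read as $0$ there. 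I would check this against the symmetric-function form of the recursion, where the factor $(x_1x_2)^{\xi_s}$ only produces Schur functions with second part $\ge \xi_s$.
\item The grading convention for $\tau$ in Theorem~\ref{ses} must match $\tau^*_p$, so that the shift contributes $q^{+(s-1)\xi_s}$ and not its inverse.
\end{itemize}
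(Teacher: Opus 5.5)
Your proposal is correct and follows essentially the same route as the paper: induction on the (number of parts, last part) order, the recursion for graded multiplicities obtained from the short exact sequence of Theorem~\ref{ses}, and identification of that recursion with the cocharge Kostka recursion of Theorem~\ref{thmKtilde}. Your extra bookkeeping fills in details the paper leaves tacit. This covers the $\mathfrak{sl}_2$-splitting, the sign of the grading shift, the trailing zero in $\mu^-$, and reading $\tilde K$ with a negative second part as $0$. You should also include the empty partition (arising as $\xi^-$ when $s=2$ and $\xi_1=\xi_2$) in the base case, as $k=0$.
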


\begin{proof}
We define a partial order on the set of partitions as follows: given partitions $\xi$ and $\xi'$ with $s$ and $s'$ parts respectively, we say $\xi > \xi'$ if $s > s'$, and if $s = s'$, then $\xi_s > \xi'_s$.

We now prove the theorem by induction. If $\xi = (r)$, then the result is immediate since $V(\xi) \cong V(r)$.

Assume the statement holds for all partitions with at most $s-1$ parts, and let $\xi$ be a partition with $s$ parts. Then the inductive hypothesis applies to both $V(\xi^-)$ and $V(\xi^+)$.

From the short exact sequence in Theorem \ref{ses}, we obtain the recursive formula
\[
[V(\xi) : V(r)]_q = [V(\xi^+) : V(r)]_q + q^{(s-1)\xi_s} [V(\xi^-) : V(r)]_q.
\]

The result now follows from this recursion together with Theorem \ref{thmKtilde}.
\end{proof}

\begin{cor}
\[
\mathrm{Ch}_{\mathrm{gr}} V(\xi)
=
\sum_{n:\,|\xi|-n \in 2\Z_+}
[V(\xi):V(n)]_q \, \mathrm{Ch}\,V(n)
=
\sum_{\mu}
\tilde{K}_{\mu\xi}(q) S_\mu.
\]
\end{cor}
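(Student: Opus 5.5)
The first equality holds because $V(\xi)$ is a finite-dimensional graded $\mathfrak{sl}_2[t]$-module, hence a graded $\mathfrak{sl}_2$-module. By complete reducibility each graded piece $V(\xi)[k]$ decomposes into irreducibles $V(n)$. Collecting the multiplicities degree by degree gives
\[
\mathrm{Ch}_{\mathrm{gr}} V(\xi)=\sum_n [V(\xi):V(n)]_q\,\mathrm{Ch}\,V(n).
\]
Only $n$ with $|\xi|-n\in 2\Z_+$ occur. Indeed, $v_\xi$ has $h$-weight $|\xi|$ and generates $V(\xi)$, so every weight lies in $|\xi|-2\Z_+$. In particular every highest weight $n$ satisfies $n\le|\xi|$ and $n\equiv|\xi|\pmod 2$.

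For the second equality I would substitute the preceding theorem, $[V(\xi):V(n)]_q=\tilde K_{\mu\xi}(q)$ with $\mu=\big(\frac{|\xi|+n}{2},\frac{|\xi|-n}{2}\big)$. Then I would reindex the sum by $\mu$. The map $n\mapsto\mu$ is a bijection from $\{n\ge0:|\xi|-n\in2\Z_+\}$ onto the partitions $\mu\vdash|\xi|$ with $\ell(\mu)\le2$, with inverse $n=\mu_1-\mu_2$.

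The only point needing care is the meaning of $S_\mu$. I would interpret it as the two-variable Schur polynomial $s_\mu(x_1,x_2)$, with $x_1,x_2$ recording the $h$-eigenvalues $\pm1$. More precisely, $x_1^ax_2^b$ corresponds to weight $a-b$, and $x_1x_2$ corresponds to weight $0$. Under this dictionary,
\[
s_{(\mu_1,\mu_2)}(x_1,x_2)=(x_1x_2)^{\mu_2}h_{\mu_1-\mu_2}(x_1,x_2),
\]
and this reduces to $\sum_{j=0}^{n}x_1^{n-j}x_2^{j}$, which is exactly $\mathrm{Ch}\,V(n)$ with weights $n,n-2,\dots,-n$. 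The identity $s_{(\la_1+\mu_n,\la_2+\mu_n)}=(x_1x_2)^{\mu_n}s_{(\la_1,\la_2)}$, used in the proof of Theorem~\ref{thmKtilde}, is the relevant fact here. Partitions $\mu$ with $\ell(\mu)>2$ may harmlessly be added to the sum, because $s_\mu(x_1,x_2)=0$ for them. Hence the sum over $\mu$ can be taken over all $\mu\vdash|\xi|$.

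Finally, I would remark that the resulting sum $\sum_\mu \tilde K_{\mu\xi}(q)s_\mu(x_1,x_2)$ equals $q^{n(\xi)}Q'_\xi(x_1,x_2;q^{-1})$. This is the Hall--Littlewood description promised in the abstract. Nothing in this argument is a genuine obstacle; the step most likely to require care is fixing the normalization of $S_\mu$ and of the variables so that $\mathrm{Ch}\,V(\mu_1-\mu_2)$ matches $s_\mu$ exactly, rather than matching it only up to the factor $(x_1x_2)^{\mu_2}$.
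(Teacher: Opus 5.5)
Your proposal is correct and matches the paper's intended argument: the paper gives no separate proof, treating the corollary as immediate from the preceding theorem by decomposing each graded piece into $\mathfrak{sl}_2$-irreducibles, substituting $[V(\xi):V(n)]_q=\tilde{K}_{\mu\xi}(q)$, and reading $S_\mu$ as $s_\mu(x_1,x_2)$, which vanishes for $\ell(\mu)>2$ and equals $\mathrm{Ch}\,V(\mu_1-\mu_2)$ once $x_1x_2$ is given weight $0$. One small point to tighten: the bound placing all weights in $|\xi|-2\mathbb{Z}_+$ needs $(x\otimes\mathbb{C}[t])v_\xi=0$ together with PBW, which gives $V(\xi)=U(y\otimes\mathbb{C}[t])v_\xi$; the fact that $v_\xi$ generates $V(\xi)$ is not enough on its own.
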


\section*{Acknowledgements}
The author gratefully acknowledges Ole Warnaar for communicating the proof of the recursion for cocharge Kostka polynomials. The author thanks Travis Scrimshaw and Katsuyuki Naoi for a careful reading of the manuscript and for providing valuable comments. The author also expresses sincere gratitude to the referee for a thorough review and for suggesting helpful corrections. Financial support from the National Institute of Science Education and Research (NISER), Bhubaneswar, and the Homi Bhabha National Institute (HBNI), Mumbai, is gratefully acknowledged.

\end{document}